\newcommand{\BibTeX}{{\scshape Bib}\kern-.08em\TeX}
\newcommand{\T}{\S\kern .15em\relax }
\newcommand{\AMS}{$\mathcal{A}$\kern-.1667em\lower.5ex\hbox
        {$\mathcal{M}$}\kern-.125em$\mathcal{S}$}
\newcommand{\field}[1]{\mathbb{#1}}
\newcommand{\RR}{\field{R}}
\def\vect#1{\overrightarrow{#1}}
\def\d{\mathrm{d}}
\def\Reflx{\mathop{\rm Reflec}\nolimits}
\def\Refrac{\mathop{\rm Refrac}\nolimits}
\title[A direct proof of Malus' theorem]{A direct proof of Malus' theorem using the symplectic sructure of the set of oriented straight lines}
\date {\today}
\author[ C.-M.~Marle]{Charles-Michel Marle}
\address{27 avenue du 11 novembre 1918 \\92190 Meudon\\ France}
\email{charles-michel.marle@math.cnrs.fr}
\urladdr{http://marle.perso.math.cnrs.fr/}
\subjclass{53D05, 53D12, 53B50, 7803}
\keywords{Geometrical Optics, Malus' Theorem, symplectic structuress, Lagrangian submanifolds}
\dedicatory{This modest work is dedicated to the memory\\ of the former students of the French 
\'Ecole Polytechnique\\ \'Etienne Louis Malus (X 1794) and Pierre Charles François Dupin (X 1801),\\ in the hope that 
this school will in the future, as it did in the past, produce\\ high level scientists and engineers rather than bankers and traders.}
\begin{document}
\def\smfbyname{}

\begin{abstract}
We present a direct proof of Malus' Theorem in Geometrical Optics founded on the symplectic structure of the
set of all oriented straight lines in an Euclidean affine space.
\end{abstract}

\begin{altabstract}
Nous présentons une preuve directe du théorème de Malus de l'optique géométrique basée sur la structure symplectique
de l'ensemble des droites orientées d'un espace affine euclidien.
\end{altabstract}
\maketitle


\section{Introduction}

Geometric Optics is a physical theory in which the propagation of light is described in terms of \emph{light rays}.
In this theory, the physical space in which we live and in which the light propagates is treated, once a unit of lenghth
is chosen, as a three-dimensional Euclidean affine space $\mathcal E$. In a transparent homogeneous medium occupying
the whole physical space $\mathcal E$, a light ray is described by an oriented straight line drawn in that space. 
When the transparent medium occupies only a part of $\mathcal E$, a light ray, as long as it is contained in that medium,
is described by an oriented segment of a straight line, but it will be convenient to consider the full oriented straight line  
which supports that segment. Reflections on smooth reflecting surfaces, or refractions through smooth 
surfaces separating two transparent media with different refractive indices, which transform an incident light 
ray into the corresponding reflected or refracted light ray, therefore appear as  transformations, defined on a part 
of the set $\mathcal L$ of all oriented straight lines in $\mathcal E$, with values in $\mathcal L$.
\par\smallskip

The set $\mathcal L$ of all possible oriented straight lines drawn in the three-dimensional Euclidean affine
space $\mathcal E$ depends on four parameters: an oriented line $L\in{\mathcal L}$ being given, two parameters 
are needed to specify the unit vector $\vect u$ parallel to and of the same orientation as $L$, and two more parameters, 
for example the coordinates of its intersection point with a transverse fixed plane,
are needed to specify the position of $L$. We will prove below that $\mathcal L$ has the structure of a smooth
four-dimensional symplectic manifold\footnote{More generally, the space of oriented straight lines in an $n$-dimensional
Euclidean affine space is a $2(n-1)$-dimensional symplectic manifold.}.
\par\smallskip

Very often in Geometrical Optics one deals with the propagation of light rays which  make a sub-family 
of the family of all possible light rays, smoothly depending on a number of parameters smaller than 4. 
For example, the family of light rays emitted by a luminous point in all possible directions, or the family of
light rays emitted by a smooth luminous surface, each ray being emitted in the direction orthogonal to the surface, smoothly
depend on $2$ parameters.  Let us state two definitions.

\begin{defi}\label{DefinitionRank}
The \emph{rank} of a family of rays which smoothly depend on a finite number of parameters is the number of 
these parameters.
\end{defi}
 
\begin{defi}\label{NormalFamily} 
A rank 2 family of rays is said to be \emph{normal} if 
at each point of the lighted part of the physical space $\mathcal E$ there exists 
a small piece of smooth surface
orhtogonal to all the rays which meet it. 
\end{defi}

In \cite{Hamilton1827}, Hamilton uses a slightly different terminology: he calls \emph{class} the rank of a system
of rays and \emph{orthogonal system} a normal system of rays.
\par\smallskip

In an homogeneous and isotropic medium, the family of light rays
emitted by a luminous point is a normal family: the spheres centered on the luminous point are indeed orthogonal to all rays.
The family of rays emitted by a smooth luminous surface which, at each of its point,
emits a ray in the direction orthogonal to the surface, too is a normal family: it is indeed a well known 
geometric property of the straight lines orthogonal to a smooth surface.
\par\smallskip

Malus' theorem states that a rank 2 normal family of rays which propagates through an optical system made of 
any number of homogeneous and isotropic transparent media, with any number of smooth reflecting or refracting 
surfaces, always remains normal, in each of the media through which it propagates.
\par\smallskip

After a short presentation of its historical backgroud in Section \ref{history}, we give a direct proof of Malus' theorem
which rests on symplectic geometry. We show in Section \ref{SymplecticStructure}
that the set $\mathcal L$ of all oriented straight lines
in the Euclidean affine space $\mathcal E$ is a smooth $4$-dimensional manifold, naturally endowed with a symplectic
$2$-form $\omega$.This symplectic form  is the pull-back, by a diffeomorphism of
$\mathcal L$ onto the cotangent bundle $T^*\Sigma$ to a two-dimensional sphere $\Sigma$, of the canonical symplectic form
on that cotangent bundle. That diffeomorphism of $\mathcal L$ onto $T^*\Sigma$ is not uniquely determined:
it depends on the choice of a reference point in $\mathcal E$; but the symplectic form $\omega$
does not depend on that choice. In Section \ref{ReflectionRefraction} we prove that reflection on a smooth surface
is a symplectic diffeomorphism of one open subset of the symplectic manifold $({\mathcal L},\omega)$
(made by light rays which hit the mirror on its reflecting side) onto another open subset of that manifold.
Similarly, we prove that refraction through a smooth surface which separates two transparent media of refractive indices
$n_1$ and $n_2$ is a symplectic diffeomorphism of one subset of $\mathcal L$ endowed with the symplectic form
$n_1\omega$ onto another open subset of that space endowed with the symplectic form $n_2\omega$.
In Section \ref{NormalLagrangian} we prove that a rank 2 family of rays is normal if and only if it is a Lagrangian
submanifold of $({\mathcal L},\omega)$. We conclude in Section \ref{Conclusion}: since a symplectic 
diffeomorphism maps Lagrangian submanifolds onto Lagrangian submanifolds, Malus' theorem immediately follows 
from the results obtained in previous Sections.
\par\smallskip

For much more elaborate applications of symplectic geometry in Optics, the readers are referred to
\cite{Arnold} (Chapter 9, Section 46, pp. 248--258 and Appendix 11, pp. 438--439) and \cite{GuilleminSternberg1984} 
(Introduction, pp. 1--150). 

\section{Historical background}\label{history}

\'Etienne Louis Malus (1775--1812) 
is a French scientist who investigated geometric properties of families of straight oriented lines, in view of applications to light rays. 
He developed Huygens undulatory theory of light, discovered and investigated the phenomenon of 
\emph{ polarization of light} and the phenomenon of \emph{ double refraction} of light in crystals. 
He participated in Napoleon's disastrous expedition to Egypt (1798--1801)  where he contracted diseases probably
responsible for his early death.
He proved \cite{Malus1808} that the family of rays emitted by a luminous point source (which, as
we have seen above, is normal) remains normal after \emph{ one reflection} on a smooth surface, or 
\emph{ one refraction} through a smooth surface but 
he was not sure \cite{Malus1811} whether this property remains true for \emph{ several} 
reflections or refractions~\footnote{This is an illustration of the famous Arnold theorem: \emph{when a theorem
or a mathematical concept bears the name of a scientist, it is not due to that scientist}. 
Arnold added: of course, that theorem applies to itself!}.
His works on families of oriented straight lines were later used 
and enhanced by Hamilton. 
\par\smallskip

For reflections, a very simple geometric proof of Malus' theorem was obtained by
the French scientist Charles Dupin \cite{Dupin1816}. For this reason, 
in French Optics manuals \cite{Courty}, Malus' theorem is frequently called Malus-Dupin's theorem. 
According to \cite{ConwaySynge}, Quetelet and Gergonne gave a full proof of Malus' theorem 
for refractions in 1825. Independently, the great Irish mathematician William Rowan Hamilton
(1805--1865) gave a proof of this theorem, both for reflections and for refractions, 
in his famous paper \cite{Hamilton1827}. His 
proof rests on the stationarity properties of the optical length of rays, with respect to 
infinitesimal displacements of the points of reflections or of refractions, 
on the reflecting or refracting surfaces. 
\par\smallskip

Charles François Dupin (1784--1873) is a French
mathematician and naval engineer. Several mathematical objects in differential geometry
bear his name: \emph{Dupin cyclids} (remarkable surfaces he discovered when he was
a young student at the French \'Ecole Polytechnique), \emph{Dupin indicatrix} (which describes the local shape
of a surface). It is said in Wikipedia \cite{Dupin} 
that he inspired to the poet and novelist Edgar Allan Poe 
(1809--1849) the figure of \emph{ Auguste Dupin} appearing in the three 
detective stories: \emph{The murders in the rue Morgue}, 
\emph{The Mystery of Marie Roget} and \emph{The Purloined Letter}.

\section{The symplectic structure of the set of oriented straight lines}\label{SymplecticStructure}

\begin{prop}
The set $\mathcal L$ of all possible oriented straight lines in 
the affine $3$-dimensional Euclidean space $\mathcal E$ has a natural structure of smooth
$4$-dimensional manifold, is endowed with a symplectic form $\omega$ and is diffeomorphic,
by a symplectic diffeomorphism, to the cotangent bundle $T^*\Sigma$ to a 2-dimensional sphere $\Sigma$ .
\end{prop}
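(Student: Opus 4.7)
The plan is to construct, from an auxiliary choice of reference point $O\in\mathcal E$, an explicit bijection $\Phi_O$ between $\mathcal L$ and the tangent bundle $T\Sigma$ of the unit sphere $\Sigma$ in the space of translations of $\mathcal E$; then, using the Euclidean metric to identify $T\Sigma$ with $T^*\Sigma$, to transfer to $\mathcal L$ the smooth $4$-dimensional structure and the Liouville symplectic form; and finally to check that the resulting symplectic form $\omega$ does not depend on $O$, although $\Phi_O$ itself does.

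First I would define, for every oriented line $L\in\mathcal L$,
\[
\Phi_O(L)=(\vect u,\vect v),\qquad \vect v=\vect{OM}-(\vect{OM}\cdot\vect u)\vect u,
\]
where $\vect u$ is the unit vector giving the positive direction of $L$ and $M$ is any point of $L$. The vector $\vect v$ lies in $\vect u^{\perp}$ and is independent of the choice of $M\in L$, because changing $M$ adds a multiple of $\vect u$ to $\vect{OM}$. Identifying $T_{\vect u}\Sigma$ with $\vect u^\perp$, the pair $(\vect u,\vect v)$ is an element of $T\Sigma$, and $\Phi_O$ is a bijection from $\mathcal L$ onto $T\Sigma$: its inverse sends $(\vect u,\vect v)$ to the oriented line of direction $\vect u$ through the point $O+\vect v$. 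Transporting the smooth structure of $T\Sigma$ gives $\mathcal L$ the structure of a smooth $4$-dimensional manifold. Using the Euclidean metric to identify each $T_{\vect u}\Sigma$ with $T_{\vect u}^*\Sigma$ (the vector $\vect v$ corresponding to the covector $\vect w\mapsto\vect v\cdot\vect w$), I obtain a diffeomorphism $\widetilde\Phi_O:\mathcal L\to T^*\Sigma$ and I set $\omega=\widetilde\Phi_O^{\,*}\omega_0$, where $\omega_0$ is the canonical symplectic form of $T^*\Sigma$. By construction $(\mathcal L,\omega)$ is a symplectic manifold and $\widetilde\Phi_O$ is a symplectic diffeomorphism.

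The main obstacle, and the only non-formal point, is the independence of $\omega$ with respect to $O$. If $O$ is replaced by $O'$, writing $\vect{OM}=\vect{OO'}+\vect{O'M}$ and projecting onto $\vect u^\perp$ one finds
\[
\Phi_O(L)=\bigl(\vect u,\vect v'+\vect t(\vect u)\bigr),\qquad \vect t(\vect u)=\vect{OO'}-(\vect{OO'}\cdot\vect u)\vect u,
\]
where $\Phi_{O'}(L)=(\vect u,\vect v')$. Thus $\widetilde\Phi_O\circ\widetilde\Phi_{O'}^{-1}$ acts on $T^*\Sigma$ by fiberwise translation along the $1$-form $\alpha$ on $\Sigma$ given by $\alpha_{\vect u}(\vect w)=\vect{OO'}\cdot\vect w$ for $\vect w\in\vect u^\perp$. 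But $\alpha$ is the restriction to $\Sigma$ of the differential of the linear function $\vect x\mapsto\vect{OO'}\cdot\vect x$, hence is exact; and fiberwise translation on $T^*\Sigma$ by a closed $1$-form is well known to preserve the canonical symplectic form $\omega_0$. Consequently $\widetilde\Phi_O^{\,*}\omega_0=\widetilde\Phi_{O'}^{\,*}\omega_0$, so $\omega$ is intrinsic. Combining the three steps proves the proposition, and moreover shows that any $\widetilde\Phi_O$ realizes the desired symplectic diffeomorphism onto $T^*\Sigma$.
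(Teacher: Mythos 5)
Your proposal is correct and follows essentially the same route as the paper: the covector you attach to $L$ via the metric identification, namely $\vect w\mapsto\vect v\cdot\vect w=\vect{OM}\cdot\vect w$ on $T_{\vect u}\Sigma$, is exactly the linear form $\eta$ the paper uses, and your independence argument (the change of reference point is a fiberwise translation by an exact $1$-form, which preserves the canonical symplectic form) is the same as the paper's observation that $\lambda_{O'}=\lambda_O+\d(f_{O'O}\circ\pi_\Sigma)$ and hence $\d\lambda_{O'}=\d\lambda_O$. The only cosmetic difference is that you pass through $T\Sigma$ before identifying with $T^*\Sigma$.
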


\begin{proof}
Let indeed $\Sigma$ be a sphere of any fixed radius $R$ (for example $R=1$) centered on a point $C\in{\mathcal E}$  
and $O$ be another fixed point in $\mathcal E$. Of course we can take $O=C$, but for clarity it is better to separate 
these two points. An oriented straight line $L$ determines
\begin{itemize}

\item{} a unique point $m\in\Sigma$ such that the vector $\vect u=\vect{Cm}$ 
is parallel to and of the same direction as $L$,

\item{} a unique linear form $\eta$ on the tangent space $T_m\Sigma$
at $m$ to the sphere $\Sigma$, given by
 $$\eta(\vect w)=\vect{OP}\cdot\vect w\quad \hbox{for all}\quad 
 \vect w\in T_m\Sigma\,,
 $$
where $P$ is any point of the line $L$, and where $\vect{OP}\cdot\vect w$ denotes the scalar product of the vectors
$\vect{OP}$ and $\vect w$.
\end{itemize}

The pair $(m,\eta)$ is an element of the \emph{ cotangent bundle} 
$T^*\Sigma$. In fact $m$ being determined by $\eta$, we can say that $\eta$ is an element of $T^*\Sigma$.
\par\smallskip

Conversely, an element $\eta\in T^*\Sigma$, \emph{i.e.} a linear form $\eta$ on the tangent space to $\Sigma$ at some point $m\in\Sigma$, determines an oriented
straight line $L$, parallel to and of the same direction as $\vect u=\vect{Cm}$. 
This line is the set of points $P\in{\mathcal E}$ such that
 $$\vect{OP}\cdot \vect w=\eta(\vect w)\quad\hbox{for all } \vect w\in T_m\Sigma\,.
 $$ 
There exists (\cite{LibermannMarle} p. 59, \cite{Camille} p. 176) on the cotangent bundle $T^*\Sigma$ a unique
differential one-form $\lambda_\Sigma$, called the \emph{
Liouville form}, whose exterior differential $\d \lambda_\Sigma$ is a 
symplectic form on $T^*\Sigma$. The above described
1--1 correspondence between the set $\mathcal L$ of all oriented straight lines 
and the cotangent bundle $T^*\Sigma$ allows us to transport on $\mathcal L$ 
the differentiable manifold structure, the Liouville one form $\lambda_\Sigma$ and the symplectic 
form $\d \lambda_\Sigma$. So we get on $\mathcal L$ a differential one-form $\lambda_O$ 
and a symplectic form $\omega=\d\lambda_O$. Therefore $({\mathcal L},\omega)$
is a \emph{ symplectic manifold}. 
\par\smallskip

The diffeomorphism so obtained, the one-form $\lambda_O$ and its exterior
differential $\omega=\d\lambda_O$ \emph{ do not depend on} the choice of
the centre $C$ of the sphere $\Sigma$ (with the obvious convention that two spheres 
of the same radius centered on two different points $C$ and $C'$ are identified by means 
of the translation which sends $C$ on $C'$). 
However, this diffeomorphism
\emph{ depends on} the choice of the point $O$, and so does the one-form
$\lambda_O$: when, to a given straight line $L$, the choice of $O$ associates 
the pair $(m,\eta)\in T^*\Sigma$, the choice of another point $O'$ 
associates the pair $\bigl(m,\eta+\d f_{O'O}(m)\bigr)$, where 
$f_{O'O}:\Sigma\to\RR$ is the smooth function defined on $\Sigma$
 $$f_{O'O}(n)=\vect{O'O}\cdot\vect{Cn}\,,\quad n\in\Sigma\,.
 $$
Therefore, if the choice of $O$ determines on $\mathcal L$ the one-form $\lambda_O$, the choice of $O'$ determines the one-form
 $$\lambda_{O'}=\lambda_O+\d (f_{O'O}\circ \pi_\Sigma)
 $$
where $\pi_\Sigma:T^*\Sigma\to\Sigma$ is the canonical projection.
The symplectic form  $\omega$ on the set of all oriented
straight lines $\mathcal L$ \emph{ does not depend on the choice
of $O$, nor on the choice of $C$}, since we have 
 $$\omega=\d\lambda_O=\d\lambda_{O'}\quad \hbox{because}\ \d\circ\d=0\,.\qedhere
 $$  
\end{proof}

\begin{prop}
Let $(\vect e_1,\vect e_2,\vect e_3)$ be an orthonormal basis of the Euclidean vector space $\vect{\mathcal E}$ associated to the affine Euclidean space $\mathcal E$. Any oriented straight line $L\in{\mathcal L}$ can be determined by its unit directing vector $\vect u$ and by a point $P\in L$ (determined up to addition of a vector collinear with $\vect u$).  Expressed in terms of the coordinates $(p_1,p_2,p_3)$ of $P$ in the affine frame 
$(O,\vect e_1,\vect e_2,\vect e_3)$ and of the components $(u_1,u_2,u_3)$ of 
$\vect u$, the symplectic form $\omega$ is given by
 $$\omega=\sum_{i=1}^3\d p_i\wedge\d u_i\,.
 $$
\end{prop}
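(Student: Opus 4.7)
The plan is to compute the Liouville one-form $\lambda_O$ on $\mathcal{L}$ directly in the stated coordinates and then exterior-differentiate. Using the bijection $L\leftrightarrow(m,\eta)$ from the previous proof, with radius $R=1$ so that $m=C+\vect u$ and $\delta m=\delta\vect u$, and with $\eta(\vect w)=\vect{OP}\cdot\vect w$ on $T_m\Sigma$, the transported Liouville form reads
\[
\lambda_O\bigl|_L(\delta L)=\eta(\delta m)=\vect{OP}\cdot\delta\vect u .
\]
Writing $\vect{OP}=\sum_i p_i\vect e_i$ and $\delta\vect u=\sum_i(\delta u_i)\vect e_i$ gives the coordinate expression $\lambda_O=\sum_{i=1}^3 p_i\,\d u_i$, whence the announced formula
\[
\omega=\d\lambda_O=\sum_{i=1}^3 \d p_i\wedge \d u_i
\]
follows at once.

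The step that I expect to be the main obstacle --- indeed the only non-routine point --- is the verification that these formulas actually make sense on the four-dimensional manifold $\mathcal L$, since $(p_1,p_2,p_3,u_1,u_2,u_3)$ are six redundant parameters: the $u_i$ satisfy $u_1^2+u_2^2+u_3^2=1$, and $P$ is determined only up to a translation along $\vect u$, i.e.\ up to a substitution $(p_i)\mapsto(p_i+tu_i)$ where $t$ is any smooth function on $\mathcal L$. I would check well-definedness as follows. The unit-vector constraint gives $\sum_i u_i\,\d u_i=\frac{1}{2}\d\bigl(\sum_i u_i^2\bigr)=0$ on $\Sigma$; so $\lambda_O$ picks up under the substitution only the extra term $t\sum_i u_i\,\d u_i=0$, while the two-form acquires
\[
\sum_i\bigl(u_i\,\d t+t\,\d u_i\bigr)\wedge \d u_i=\d t\wedge\sum_i u_i\,\d u_i=0.
\]
Hence both $\lambda_O$ and $\omega$ descend unambiguously from the ambient six-dimensional $(P,\vect u)$-space to the quotient $\mathcal{L}$.

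As a conceptual sanity check, one may recognise $\sum_i \d p_i\wedge \d u_i$ as the canonical symplectic form of $T^*\mathcal E$ (with vectors identified with covectors via the scalar product), restricted to the coisotropic submanifold $\{\,\|\vect u\|=1\,\}$ and reduced modulo its one-dimensional characteristic distribution --- which is precisely the direction of $\vect u$-translations of $P$. This reduction must yield the symplectic form $\omega$ on $\mathcal L$ constructed in the previous proposition, and the coordinate formula drops out.
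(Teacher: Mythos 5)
Your proof is correct and follows essentially the same route as the paper: compute the transported Liouville form in coordinates to get $\lambda_O=\sum_i p_i\,\d u_i$ and take the exterior derivative. Your explicit verification that the formula is unaffected by the constraint $\sum_i u_i^2=1$ and by the ambiguity $P\mapsto P+t\vect u$ is a welcome supplement, since the paper only asserts this in a remark without proof.
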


\begin{proof}
Using the definition of the Liouville one-form on 
$T^*\Sigma$, we see that
 $$\lambda_0=\sum_{i=1}^3 p_i\d u_i\,.
 $$
Therefore
 $$\omega=\d\lambda_O=\sum_{i=1}^3\d p_i\wedge\d u_i\,.\qedhere
 $$
\end{proof}

\begin{rema} The three components $u_1$, $u_2$, $u_3$ of $\vect u$ 
are not independent, 
since they must satisfy $\sum_{i=1}^3(u_i)^2=1$. The point $P\in L$ used to detemine the oriented straight line $L$ is not uniquely determined, since by adding to $P$ any vector collinear with $\vect u$ we get another point in $L$. However, these facts do not affect the validity of the expression of $\omega$
given above.
\end{rema}

\begin{rema}
The symplectic form $\omega$ can be expressed very concisely by using an obvious vector notation combining the wedge and scalar products:
 $$\omega(P,\vect u)=\d \vect P\wedge\d \vect u\,.
 $$
\end{rema}

\section{Reflection and  refraction are symplectic diffeomorphisms}\label{ReflectionRefraction}

\begin{prop} Let $M$ be a smooth reflecting surface. Let $\Reflx_M$ be the map which associates to each light ray $L_1$ which hits $M$ on its reflecting side, the reflected light ray $L_2=\Reflx_M(L_1)$. The map 
$\Reflx_M$ is a  \emph{ symplectic diffeomorphism} defined on
the open subset of the symplectic manifold $({\mathcal L},\omega)$ made by light rays which hit $M$on its reflecting side, onto the open subset made by the same
straight lines  with the opposite orientation.
\end{prop}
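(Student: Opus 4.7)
My plan is to reduce the statement to the assertion that $\Reflx_M^*\lambda_O-\lambda_O$ is an \emph{exact} $1$-form, from which $\Reflx_M^*\omega=\omega$ follows immediately by taking exterior derivatives. I would begin by parametrizing the open subset $U\subset\mathcal{L}$ of rays which hit $M$ on its reflecting side by pairs $(P,\vect u_1)$, where $P\in M$ is the impact point of the ray on the mirror and $\vect u_1$ is its unit directing vector, constrained to point toward the reflecting side. The elementary law of reflection gives
$$\vect u_2 = \vect u_1 - 2\bigl(\vect u_1\cdot\vect n(P)\bigr)\vect n(P),$$
where $\vect n(P)$ is the unit normal to $M$ at $P$; this formula is smooth in $(P,\vect u_1)$, and applied to $(P,\vect u_2)$ it recovers $(P,\vect u_1)$, so $\Reflx_M$ is a diffeomorphism of $U$ onto its image.

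The key observation is that $P$ lies on both $L_1$ and $L_2$, so the formula $\lambda_O = \vect{OP}\cdot d\vect u$ of the previous proposition, which is valid for any point chosen on the line, can be used for both rays with the \emph{same} point $P$. In the coordinates $(P,\vect u_1)$ on $U$ this gives
$$\lambda_O\bigr|_{L_1} = \vect{OP}\cdot d\vect u_1,\qquad \Reflx_M^*\lambda_O\bigr|_{L_1} = \vect{OP}\cdot d\vect u_2,$$
so their difference is $\vect{OP}\cdot d(\vect u_1-\vect u_2) = 2\,\vect{OP}\cdot d(\alpha\,\vect n)$ with $\alpha = \vect u_1\cdot\vect n(P)$. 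Setting $h=\vect{OP}\cdot\vect n(P)$ and using that $d\vect P$ is tangent to $M$, hence $d\vect P\cdot\vect n=0$, one gets $dh=\vect{OP}\cdot d\vect n$, and therefore
$$\vect{OP}\cdot d(\vect u_1-\vect u_2) = 2h\,d\alpha + 2\alpha\,dh = d(2\alpha h).$$
Taking $d$ of both sides of the identity $\Reflx_M^*\lambda_O = \lambda_O - d(2\alpha h)$ yields $\Reflx_M^*\omega=\omega$.

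The only real obstacle is the packaging: one must choose the right parametrization (impact point plus incident direction) so that the \emph{same} point $P$ can be used to evaluate $\lambda_O$ on both $L_1$ and $L_2$. Once this is done, the closedness of the difference is forced by $\vect n\cdot d\vect P=0$, and one even obtains the explicit primitive $2\alpha h$ (essentially twice the signed distance from $O$ to the tangent plane to $M$ at $P$, weighted by the cosine of the angle of incidence). The diffeomorphism assertion then reduces to remarking that $\Reflx_M$ is given by an explicit smooth involutive formula in these coordinates.
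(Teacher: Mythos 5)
Your proof is correct and follows essentially the same route as the paper: the same parametrization of a ray by its impact point $P\in M$ and its unit direction, and the same key fact $\vect n\cdot\d\vect P=0$ coming from $\d\vect P$ being tangent to the mirror. The only difference is one of packaging: you work at the level of the one-form $\lambda_O$ and exhibit the explicit primitive $2\alpha h$ for $\lambda_O-\Reflx_M^*\lambda_O$, whereas the paper differentiates once more and checks directly that $\d\vect P\wedge\d({\vect u}_2-{\vect u}_1)=0$.
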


\begin{proof}
Any oriented straight line $L_1$ which hits the mirror $M$ is determined by

\begin{itemize}
\item{} the unit vector ${\vect u}_1$ parallel to and of the same direction as $L_1$,

\item{} the incidence point $P\in M$ of the light ray on the mirror.
\end{itemize}

We will write $\vect P$ for the vector $\vect{OP}$, the fixed point $O$ being arbitrarily chosen.

The reflected ray $L_2$ is determined by

\begin{itemize}

\item{} the unit vector ${\vect u}_2$, given in terms of ${\vect u}_1$ 
by the formula
 $${\vect u}_2={\vect u}_1+2({\vect u}_1\cdot\vect n)\vect n\,,
 $$
where $\vect n$ is a unit vector orthogonal to the mirror $M$ at the incidence point
$P$, with anyone of the two possible orientations;

\item{} the same point $P\in M$ on the mirror.

\end{itemize}

According to the expression of the symplectic form $\omega$ given in the last
Remark, we have to check that
$\d \vect P\wedge\d {\vect u}_2=\d\vect P\wedge \d{\vect u}_1
$. 
We have

 \begin{align*}
 \d\vect P\wedge\d({\vect u}_2-{\vect u}_1)
 &=2\d\vect P\wedge\d\bigl((\vect u_1\cdot\vect n)\vect n\bigr)\\
 &=-2\d\bigl((\vect u_1\cdot\vect n)(\vect n\cdot\d\vect P)\bigr)\\
 &=0\,,
 \end{align*}
because $\vect n\cdot \d\vect P=0$, the differential $\d\vect P$ lying tangent to the mirror $M$, while the vector $\vect n$ is 
orthogonal to the mirror.
\end{proof}

\begin{prop} Let $R$ be a smooth refracting surface, which separates two
transparent media with refractive indexes $n_1$ and $n_2$. Let $\Refrac_R$ be the map which associates, to each light ray $L_1$ which hits the refracting surface $R$
on the side of refracting index $n_1$, the corresponding refracted ray 
$L_2=\Refrac_R(L_1)$ determined by Snell's law of refraction. The map $\Refrac_R$
is a  \emph{ symplectic diffeomorphism} defined on an 
\emph{ open subset} of $(\mathcal L,n_1\omega)$, (the set of oriented straight lines which hit $R$ on the $n_1$ side and, if $n_1>n_2$, are not totally reflected)
with values in an \emph{open subset} of $(\mathcal L,n_2\omega)$.
\end{prop}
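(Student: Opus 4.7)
The plan is to mirror, almost line for line, the reflection argument just completed. The first step is to put Snell's law in convenient vector form. Let $\vect{u}_1$ and $\vect{u}_2$ be the unit directing vectors of the incident and refracted rays, $P$ their common point of incidence on $R$, and $\vect{n}$ a unit normal to $R$ at $P$ (either orientation will do). Snell's law, together with the coplanarity of $\vect{u}_1$, $\vect{u}_2$ and $\vect{n}$, is equivalent to the statement that the tangential components of $n_1\vect{u}_1$ and $n_2\vect{u}_2$ agree, i.e.
$$n_2\vect{u}_2 - n_1\vect{u}_1 = \alpha\,\vect{n},$$
with $\alpha = n_2(\vect{u}_2\cdot\vect{n}) - n_1(\vect{u}_1\cdot\vect{n})$. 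Outside the total reflection regime, this formula determines $\vect{u}_2$ smoothly from $\vect{u}_1$ and $\vect{n}$, and it is symmetric in $(n_1,\vect{u}_1)$ and $(n_2,\vect{u}_2)$; this both shows that $\Refrac_R$ is a diffeomorphism between the stated open subsets and that $\alpha$ is smooth there.

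Using the compact expression $\omega = \d\vect{P}\wedge\d\vect{u}$ from the previous section, the symplectic condition $\Refrac_R^{*}(n_2\omega) = n_1\omega$ amounts to
$$n_2\,\d\vect{P}\wedge\d\vect{u}_2 - n_1\,\d\vect{P}\wedge\d\vect{u}_1 \;=\; \d\vect{P}\wedge\d(\alpha\vect{n}) \;=\; 0.$$
The computation to verify this is a verbatim transcription of the reflection case: Leibniz and rearranging yield the algebraic identity
$$\d\vect{P}\wedge\d(\alpha\vect{n}) \;=\; -\d\bigl(\alpha\,(\vect{n}\cdot\d\vect{P})\bigr),$$
whose right-hand side vanishes because $P$ varies on the surface $R$, so that $\d\vect{P}$ is tangent to $R$ and $\vect{n}\cdot\d\vect{P} = 0$. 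The orientation of $\vect{n}$ is immaterial, since only $\alpha$ changes sign when $\vect{n}$ is reversed.

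The main obstacle, if one may call it that, is spotting the correct vector form of Snell's law: once $n_2\vect{u}_2 - n_1\vect{u}_1$ has been identified as a scalar multiple of $\vect{n}$, the key cancellation is exactly the same identity $\vect{n}\cdot\d\vect{P} = 0$ already used for reflection, with $\alpha$ playing the role of $2(\vect{u}_1\cdot\vect{n})$. The weighting of $\omega$ by the refractive index is essential here: without the factors $n_1$ and $n_2$ the tangential components would not agree, and the two symplectic forms could not be identified in this way.
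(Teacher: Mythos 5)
Your proposal is correct and follows essentially the same route as the paper: the author also writes Snell's law as the equality of the tangential components of $n_1\vect u_1$ and $n_2\vect u_2$ (equivalently, $n_2\vect u_2-n_1\vect u_1$ is a multiple of $\vect n$) and then kills the difference $\d\vect P\wedge(n_2\d\vect u_2-n_1\d\vect u_1)$ by the same identity $\vect n\cdot\d\vect P=0$. Your formulation via the scalar $\alpha$ is just a notational repackaging of the paper's computation.
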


\begin{proof}
Any oriented straight line $L_1$ which hits the refracting surface $R$ is determined by

\begin{itemize}
\item{} the unit vector ${\vect u}_1$ parallel to and of the same direction as $L_1$,

\item{} the incidence point $P\in R$ of the light ray on the refracting surface.
\end{itemize}

We will write $\vect P$ for the vector $\vect{OP}$, the fixed point $O$ being arbitrarily chosen.

The refracted ray $L_2$ is determined by

\begin{itemize}

\item{} the unit vector ${\vect u}_2$, related to ${\vect u}_1$ by
the formula
 $$n_2\bigl({\vect u}_2-(\vect u_2\cdot\vect n)\vect n\bigr)
=  n_1\bigl({\vect u}_1-(\vect u_1\cdot\vect n)\vect n\bigr)\,,
 $$
where $\vect n$ is a unit vector orthogonal to the refractig surface $R$ at 
the incidence point $P$, with anyone of the two possible orientations;

\item{} the same point $P\in R$ on the refracting surface.

\end{itemize}

We have to check that $n_2\d \vect P\wedge\d {\vect u}_2=n_1\d\vect P\wedge 
\d{\vect u}_1$. 
We have

 \begin{align*}
 \d\vect P\wedge(n_2\d{\vect u}_2-n_1\d{\vect u}_1)
 &=\d\vect P\wedge\d\bigl(n_2(\vect u_2\cdot\vect n)\vect n
                         -n_1(\vect u_1\cdot\vect n)\vect n\bigr)\\
 &=-\d\Bigl(\bigl(n_2(\vect u_2\cdot\vect n)
                 -n_1(\vect u_1\cdot\vect n)\bigr)(\vect n\cdot\d\vect P)\Bigr)\\
 &=0\,,
 \end{align*}
because $\vect n\cdot\d\vect P=0$, the differential $\d\vect P$ lying tangent to the 
refracting surface $R$, while the vector $\vect n$ is orthogonal to $R$.
\end{proof}

\section{Normal families are Lagrangian submanifolds}\label{NormalLagrangian}

\begin{prop} A rank 2 family of oriented straight lines
is  normal (in the sense of Definition \ref{NormalFamily}) if and only if it is a Lagrangian 
submanifold (\cite{LibermannMarle} p. 92, or \cite{OrtegaRatiu} p. 123) of the symplectic manifold $({\mathcal L},\omega)$ of 
all oriented straight lines.
\end{prop}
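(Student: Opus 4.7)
The plan is to reduce both conditions to the same scalar identity by using a local parametrization of the family. Since $(\mathcal L,\omega)$ is $4$-dimensional symplectic and a rank $2$ family $\Lambda$ is a $2$-dimensional submanifold, $\Lambda$ is Lagrangian if and only if $\omega$ pulls back to zero on $\Lambda$; the content of the statement is therefore to match this pullback vanishing with the pointwise existence of small pieces of orthogonal surface.

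I would parametrize $\Lambda$ locally by $(s,t)$ and represent each ray by a point $P(s,t)$ lying on it together with its unit direction $\vect u(s,t)$. The formula $\omega=\d\vect P\wedge\d\vect u$ of the preceding section then gives
$$\omega|_\Lambda=\left(\frac{\partial\vect P}{\partial s}\cdot\frac{\partial\vect u}{\partial t}-\frac{\partial\vect P}{\partial t}\cdot\frac{\partial\vect u}{\partial s}\right)\d s\wedge\d t,$$
so the Lagrangian condition reduces to the single scalar identity
$$\frac{\partial\vect P}{\partial s}\cdot\frac{\partial\vect u}{\partial t}=\frac{\partial\vect P}{\partial t}\cdot\frac{\partial\vect u}{\partial s}.\qquad(\ast)$$

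To test normality I would seek an orthogonal surface in the form $Q(s,t)=P(s,t)+f(s,t)\vect u(s,t)$ with $f$ smooth. Because $|\vect u|=1$ forces $\vect u\cdot\partial_s\vect u=\vect u\cdot\partial_t\vect u=0$, the orthogonality conditions $\partial_s Q\cdot\vect u=\partial_t Q\cdot\vect u=0$ collapse to $\partial_s f=-\partial_s\vect P\cdot\vect u$ and $\partial_t f=-\partial_t\vect P\cdot\vect u$. By the Poincar\'e lemma such an $f$ exists locally if and only if the $1$-form $\alpha=-(\partial_s\vect P\cdot\vect u)\,\d s-(\partial_t\vect P\cdot\vect u)\,\d t$ is closed; a direct computation of $\d\alpha$, in which the second derivatives of $\vect P$ cancel by Schwarz's lemma and the terms containing $\vect u\cdot\partial\vect u$ drop out, reveals that $\d\alpha=0$ is exactly the identity $(\ast)$. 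The delicate point I would still have to address is matching the local calculus picture with the pointwise definition of normality given in the text: off the caustics of $\Lambda$, the map $(s,t)\mapsto Q(s,t)$ is an immersion and therefore defines a genuine surface through any chosen lighted point, and conversely any given local orthogonal surface fits this ansatz by expressing its points along the rays of $\Lambda$ as $Q=P+f\vect u$.
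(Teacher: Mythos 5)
Your proposal is correct and follows essentially the same route as the paper: pull back $\omega$ through the local parametrization $(s,t)\mapsto(P,\vect u)$, reduce the Lagrangian condition to the scalar identity $(\ast)$, and identify it via Schwarz's lemma and the Poincar\'e lemma with the local integrability of the orthogonality condition for surfaces of the form $P+f\vect u$. Your closing remark about caustics and the immersivity of $Q$ is a point the paper's proof passes over in silence, but it does not change the substance of the argument.
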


\begin{proof}
Let us consider a rank 2 family of oriented straight lines. Locally, in a neighbourhood of each of its straight lines, the family can be determined by a smooth map $(k_1,k_2)\mapsto L(k_1,k_2)$, defined on an open substet of $\RR^2$,
with values in the manifold $\mathcal L$ of oriented straight lines.
For each value of $(k_1,k_2)$, the ray $L(k_1,k_2)$ can be determined by
\begin{itemize}
\item{} a point $P(k_1,k_2)$ of the ray $L(k_1,k_2)$,

\item{} the unit director vector $\vect u(k_1,k_2)$ of the ray $L(k_1,k_2)$
\end{itemize}

Although $P(k_1,k_2)$ is not uniquely determined, we can arrange things so that the map $(k_1,k_2)\mapsto\bigl(P(k_1,k_2),\vect u(k_1,k_2)\bigr)$ is smooth. By assumption it is everywhere of rank 2.

The reciprocal image of the symplectic form $\omega$ of $\mathcal L$ by the map
$(k_1,k_2)\mapsto \bigl(P(k_1,k_2),\vect u(k_1,k_2)\bigr)$ is
 $$\left(\frac{\partial\vect P(k_1,k_2)}{\partial k_1}\cdot
         \frac{\partial\vect u(k_1,k_2)}{\partial k_2} - 
         \frac{\partial\vect P(k_1,k_2)}{\partial k_2}\cdot
         \frac{\partial\vect u(k_1,k_2)}{\partial k_1}\right)\d k_1\wedge\d k_2\,,
 $$
where, as before, we have written $\vect P(k_1,k_2)$ for $\vect{OP}(k_1,k_2)$, the origin $O$ being any fixed point in $\mathcal E$.  

Using the symmetry property of the second derivatives
 $$\frac{\partial^2\vect P(k_1,k_2)}{\partial k_1\partial k_2}=
   \frac{\partial^2\vect P(k_1,k_2)}{\partial k_2\partial k_1}
 $$
we see that the reciprocal image of $\omega$ can be written
 $$\left(\frac{\partial}{\partial k_2}\left(\vect u\cdot
                                       \frac{\partial\vect P}
                                            {\partial k_1}\right) -
         \frac{\partial}{\partial k_1}\left(\vect u\cdot
                                       \frac{\partial\vect P}
                                            {\partial k_2}\right) \right)
 \d k_1\wedge\d k_2\,.
 $$
where we have written $\vect u$ and $\vect P$ for $\vect u(k_1,k_2)$ and 
$\vect P(k_1,k_2)$.

Our rank 2 family of rays is a Lagrangian submanifold of $({\mathcal L},\omega)$
if and only if the reciprocal image of $\omega$ vanishes, \emph{i.e.}, if and only if
 $$\frac{\partial}{\partial k_2}\left(\vect u\cdot
                                       \frac{\partial\vect P}
                                            {\partial k_1}\right)=
         \frac{\partial}{\partial k_1}\left(\vect u\cdot
                                       \frac{\partial\vect P}
                                            {\partial k_2}\right)\,,
 $$
or if and only if there exists locally a smooth function 
$(k_1,k_2)\mapsto F(k_1,k_2)$ such that
 $$\vect u\cdot\frac{\partial\vect P}{\partial k_1}=
    \frac{\partial F}{\partial k_1}\,,\quad
   \vect u\cdot\frac{\partial\vect P}{\partial k_2}=
    \frac{\partial F}{\partial k_2}\,.\eqno(*)
 $$

Let us now look at the necessary and sufficient conditions under which there exists
locally, near a given ray of the family, a smooth surface orthogonal to all the neighbouring rays of the family. This surface is the image of a map
 $$(k_1,k_2)\mapsto P(k_1,k_2)+\lambda(k_1,k_2)\vect u(k_1,k_2)\,,
 $$
where $(k_1,k_2)\mapsto\lambda(k_1,k_2)$ is a smooth function.

This surface is orhtogonal to the rays if and only if the function $\lambda$ is such that
 $$ \vect u(k_1,k_2)\cdot\d\Bigl(\vect P(k_1,k_2)+\lambda(k_1,k_2)
                              \vect u(k_1,k_2)\Bigr)=0\,.
 $$
The equalities $\vect u(k_1,k_2)\cdot\d\vect u(k_1,k_2)=0$ and
$\vect u(k_1,k_2)\cdot\vect u(k_1,k_2)=1$ allow us to write this condition as
$$ \vect u(k_1,k_2)\cdot\d \vect P(k_1,k_2)+\d\lambda(k_1,k_2)
                              =0\,.\eqno(**)
$$
We see that when there exists a smooth
function $F$ which satifies $(*)$, all functions $\lambda=-F+$ Constant satisfy 
$(**)$, and conversely when there exists a smooth function $\lambda$ which satisfies $(**)$, all functions $F=-\lambda$ + Constant satisfy $(*)$. A rank 2 family of rays is therefore normal il and only if it is a Lagrangian submanifold of $\mathcal L$.
\end{proof}

\section{Malus' Theorem}\label{Conclusion}

Since reflections and refractions are 
symplectic diffeomorphisms, and since by composing several symplectic diffeomorphisms we get again 
a symplectic diffeomorphism, the travel of light rays through an optical device with any number of reflecting 
or refracting smooth surfaces is a symplectic diffeomorphism.
The image of a Lagrangian submanifold by a symplectic diffeomorphism is automatically a Lagrangian submanifold.
We therefore can state as a theorem the following result, very remarkable by the fact that
no assumption other than their smoothness is made about the shapes of the reflecting or refracting surfacesse surfaces. 
It is only assumed that these surfaces are smooth and that the reflections or refractions obey the well-known laws of Optics. 

\begin{theo}[Malus' Theorem] A two parameter normal family of light rays remains normal after any number of reflections
on smooth reflecting surfaces or refractions across smooth 
surfaces which separate transparent media with different refractive indexes.  

\end{theo}

\section{Acknowledgements}

I address my warmest thanks to Géry de Saxcé for offering me to present a talk at the 58-th Souriau Colloquium, and for all
the efforts he made and is still making for the success and the perennity of these colloquia.
\par\smallskip

I am indebted to Dominique Flament, who recently gave me the opportunity to present at his seminar 
\emph{Histoires de Géométries} the works of William Rowan Hamilton on Geometrical Optics. 
It was in preparing my talk for this seminar that I learnt about Malus' theorem and tried to find a 
direct proof founded in symplectic geometry.

\backmatter

\end{document}